\definecolor{darkblue}{rgb}{0,0,0.4}
\title{Extending properties to relatively hyperbolic groups}
\author{Daniel A. Ramras}
\address{
		Department of Mathematical Sciences\\
		Indiana University-Purdue University, Indianapolis\\ 
		402 N. Blackford Street\\
		Indianapolis, IN 46202\\
}
\email{dramras@iupui.edu}
\author{Bobby W. Ramsey}
\address{
		Department of Mathematics\\
		The Ohio State University\\
        	Columbus, OH 43210\\
        }
\email{ramsey.313@math.osu.edu}
\thanks{Ramras was partially supported by a Collaboration Grant from the Simons Foundation, USA (\#279007)}
\newtheorem*{thm*}{Theorem}
\newtheorem*{cor*}{Corollary}
\newtheorem{thm}{Theorem}[section]
\newtheorem{defn}[thm]{Definition}
\newtheorem{prop}[thm]{Proposition}
\newtheorem{cor}[thm]{Corollary}
\newtheorem{lem}[thm]{Lemma}
\newtheorem{remark}[thm]{Remark}
\newcommand{\leqs}{\leqslant}
\newcommand{\geqs}{\geqslant}
\newcommand{\N}{{\mathbb{N}}}
\newcommand{\im}{\operatorname{im}}
\newcommand{\mH}{\mathcal{H}}
\newcommand{\I}{\mathcal{I}}
\newcommand{\B}{\mathcal{B}}
\newcommand{\Y}{\mathcal{Y}}
\newcommand{\Z}{\mathcal{Z}}
\newcommand{\X}{\mathcal{X}}
\renewcommand{\P}{\mathcal{P}}
\newcommand{\D}{\mathfrak{D}}
\newcommand{\bk}{\mathbf{k}}
\newcommand{\wD}{\mathrm{w}\mathfrak{D}}
\newcommand{\sD}{\mathrm{s}\mathfrak{D}}
\newcommand{\e}{\emph}
\def\co{\colon\thinspace}
\newcommand{\srm}[1]{\stackrel{#1}{\longrightarrow}}
\newcommand{\xmaps}{\xrightarrow}
\subjclass[2010]{ Primary 20F67; Secondary 19D50, 53C23, 20F69}
\begin{document}

\begin{abstract}
Consider a finitely generated group $G$ that is relatively hyperbolic with respect to a family of subgroups $H_1, \ldots, H_n$.
We present an axiomatic approach to the problem of extending metric properties from the subgroups $H_i$ to the full group $G$. We use this to show that both (weak) \e{finite decomposition complexity} and \e{straight finite decomposition complexity} are extendable properties.  We also discuss the equivalence of two notions of straight finite decomposition complexity. 
\end{abstract}

\maketitle

\section{Introduction}
The concept of relative hyperbolicity was proposed by Gromov in \cite{G_HypGrps}, as a generalization of hyperbolicity.  Farb, 
Bowditch, Osin, and Mineyev--Yemen, \cite{Bowd_RHG, F_RHG, Os_book, MY}, have developed this in various directions, which are
equivalent for finitely generated groups.  We follow the approach to relatively hyperbolicity given by Osin \cite{Os_book}.

Say $G$ is a finitely generated group that is relatively hyperbolic with respect to a family of subgroups $\{H_i\}_{i=1}^n$, as defined in Section~\ref{RHG-sec}.
Various authors have considered the problem of extending metric properties of the subgroups $H_i$ to the  full group $G$.  In particular, \e{finite asymptotic dimension}, \e{coarse embeddability} (also known as \e{uniform embeddability}), and \e{exactness} are all known to be extendable~\cite{Os_FiniteAsdim, DG_UERHG, ozawa_RHG}.  The main goal of this article is to show that \e{finite decomposition complexity}~\cite{GTY-rigid, GTY_FDC}   and \e{straight finite decomposition complexity}~\cite{DZ} are extendable properties.  For finite decomposition complexity, this was previously observed by Sisto~\cite{Sisto}.

In this article, we present an axiomatic approach to the problem of extendability.  This approach is similar in spirit to~\cite{Guentner}, where properties $\P$ of \e{metric families} (that is, sets of metric spaces) are studied from the point of view of permanence.
We say that a metric space $X$ has the property $\P$ if the metric family $\{X\}$  has $\P$.  
We identify several conditions that such a property $\P$ may satisfy, which together imply the extendability of $\P$ for relatively hyperbolic groups.  These conditions are Coarse Inheritance, the Finite Union Theorem, the Union Theorem, and the Transitive Fibering Theorem, which are defined in Section~\ref{extend-sec}.  The Transitive Fibering Theorem is a weak version of  Fibering Permanence from~\cite{Guentner}, and is needed for our study of straight finite decomposition complexity (variants of the other conditions also appear in~\cite{Guentner}).  We also assume that $\P$ is satisfied by all metric spaces with finite asymptotic dimension.  Our main tool for extending such properties is the work of Osin~\cite{Os_FiniteAsdim} regarding the \e{relative Cayley graph} of a relatively hyperbolic group.

Finite decomposition complexity (FDC) and its weak version (wFDC) 
were introduced in~\cite{GTY-rigid} as natural generalizations of finite asymptotic dimension (FAD), and were used to study rigidity properties of manifolds. The more general notion of straight finite decomposition complexity (sFDC) was recently introduced in~\cite{DZ}.  (We review the definitions in Section~\ref{sFDC}.)
The class of groups with FDC is already quite large, and contains all countable linear groups~\cite[Theorem 3.1]{GTY-rigid}.  By~\cite[Theorem 3.4]{DZ}, all metric spaces with sFDC satisfy Yu's Property A, so finitely generated groups  with sFDC satisfy the coarse Baum--Connes conjecture~\cite{yu}.  We have the following chain of implications relating these concepts:
$$\xymatrix{ \textrm{FAD}  \ar@{=>}[r] & \textrm{FDC} \ar@{=>}[r]   & \textrm{wFDC}  \ar@{=>}[r]  
	& \textrm{sFDC} \ar@{=>}[r] & \textrm{Property A}.}
$$
The fact that weak FDC implies straight FDC is proven in Proposition~\ref{impls}.  It is also possible to formulate a weak version of sFDC, although it was shown by Dydak and Virk that this weak version is in fact equivalent to sFDC~\cite{Dydak-Virk}. 

Our results on extendability interact nicely with recent work in algebraic $K$--theory.  It was shown in~\cite{RTY} that the Integral $K$--theoretic Novikov Conjecture (injectivity of the $K$--theoretic assembly map) holds for all group rings $R[G]$, where $R$ is a unital ring and $G$  has finite decomposition complexity and a finite classifying space $K(G, 1)$. 
 If $G$ is torsion-free and relatively hyperbolic with respect to subgroups $\{H_i\}_{i=1}^n$ satisfying the conditions of this theorem, then $G$ also satisfies the conditions: by Corollary~\ref{FDC-cor}, $G$ has finite decomposition complexity, and by~\cite[Theorem A.1]{FO-BCRH}, there exists a  finite $K(G,1)$\footnote{Kasprowski~\cite{Kasprowski} has shown that the  Integral $K$--theoretic Novikov Conjecture holds for all groups $G$  with finite decomposition complexity and a  \e{finite-dimensional} classifying space. 
Hence it would be interesting to know whether the property of having a finite-dimensional classifying space is extendable (at least for torsion-free groups).}.
In related work, Goldfarb~\cite{Goldfarb-sFDC} showed that finitely generated groups with sFDC satisfy \e{weak regular coherence}, which guarantees the existence of projective resolutions of finite length for certain $R[\Gamma]$--modules over sufficiently well-behaved coefficient rings $R$.  This work is part of a program for proving surjectivity of assembly maps~\cite{Carlsson-Goldfarb-Borel}.
 
 \vspace{.2in}

\noindent {\bf Acknowledgment.} We thank the referee for informing us of~\cite{Sisto} and for other helpful comments, and we thank Daniel Kasprowski for helpful discussions regarding weak decompositions.

\section{Relatively Hyperbolic Groups}\label{RHG-sec}
Suppose $G$ is a finitely generated group with a finite symmetric generating set $S$, and let $\left\{ H_i \right\}_{i=1}^k$
be a family of finitely generated subgroups.  Then $G$ is a quotient of the free product $F = F(S) * H_1 * H_2 * \dots * H_k$, where 
$F(S)$ is the free group on $S$.  Say that $G$ is \emph{finitely presented relative to $\left\{ H_i \right\}_{i=1}^k$} if the kernel of 
the projection $F \to G$ is the normal closure of a finite subset $\mathcal{R}$ in $F$.  (Note that if $G$ is finitely presented, then it is also finitely presented relative to $\{H_i\}_{i=1}^k$).

Set $\mH = \sqcup_{i=1}^k \left( H_i \setminus \{1\} \right)$.  If a word $w$ in the alphabet $S \cup \mH$ represents the identity
element of $G$, it can be expressed in the form $w = \prod_{j = 1}^{m} a_i^{-1} r_i^{\pm 1} a_i$ where $r_i \in \mathcal{R}$ and
$a_i \in F$ for $i = 1, \ldots, m$.  The smallest possible number $m$ in such a representation of $w$ is the \emph{relative area}
of $w$, denoted by $Area_{rel}(w)$.

\begin{defn}
$G$ is \emph{hyperbolic relative to the collection of subgroups} $\left\{ H_i \right\}_{i = 1}^{k}$ if it is finitely 
presented relative to $\left\{ H_i \right\}_{i = 1}^{k}$ and there is a constant $K$ such that every word $w$ in $S \cup \mH$
that represents the identity in $G$ satisfies $Area_{rel}(w) \leq K \| w \|$, where $\| w \|$ represents the length of the word
in $S \cup \mH$.
\end{defn}

A key construction in relatively hyperbolic groups is the relative Cayley graph, $\Gamma(G, S \cup \mH)$; that is, the Cayley
graph of $G$ with respect right multiplication by elements in    
the generating set $S \cup \mH$.  This graph is not locally finite.  However Osin has proven the
following.
\begin{thm}[{\cite[Theorem 17]{Os_FiniteAsdim}}]\label{thm:relCayfad}
	The relative Cayley graph $\Gamma(G, S\cup\mH)$ has finite asymptotic dimension.
\end{thm}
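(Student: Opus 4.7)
The plan is to exploit the $\delta$-hyperbolicity of the relative Cayley graph, together with the restricted combinatorics of the peripheral cosets, to build covers of uniformly bounded multiplicity at every scale---mirroring the classical proof that a locally finite $\delta$-hyperbolic graph has finite asymptotic dimension, but with extra care to handle the unbounded valence.

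First I would show that $\Gamma(G, S\cup\mH)$ is $\delta$-hyperbolic as a metric space in the sense of Gromov. The relative linear isoperimetric inequality $\mathrm{Area}_{\mathrm{rel}}(w)\leq K\|w\|$ is precisely the statement that the relative Dehn function is linear, and the standard argument showing that a linear Dehn function forces uniformly thin triangles carries over verbatim to the relative setting, with words and relators read off the alphabet $S\cup\mH$ and the relator set $\mathcal{R}$.

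Second, I would adapt the classical shadow-cover construction used for locally finite hyperbolic graphs. Fix a basepoint $e \in G$; at each scale $n$, choose a maximal $R$-separated subset of the sphere $S(e,n)\subset \Gamma(G,S\cup\mH)$, and for each chosen point $p$ define its shadow $U_p$ to consist of all vertices $z$ such that some geodesic from $e$ to $z$ passes within distance $R$ of $p$. Thin triangles force the resulting cover $\{U_p\}$ to have multiplicity depending only on $\delta$ and $R$, while each $U_p$ has diameter $O(R)$, and the Lebesgue number grows with $R$.

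The main obstacle is that $\Gamma(G,S\cup\mH)$ is far from locally finite: every infinite $H_i$-coset is an infinite clique of diameter $1$, so the spheres $S(e,n)$ themselves are infinite and the $R$-separated selection above must be done with care. The crucial observation is that any two elements of a peripheral coset $gH_i$ have the same shadow up to bounded error when viewed from sufficiently far away; peripheral cliques act as ``fat points'' in the shadow construction. Refining the cover by using a single representative of each peripheral coset intersecting $S(e,n)$, and using the fineness of $\Gamma(G,S\cup\mH)$ in the sense of Bowditch to control how such cliques can accumulate along a radial geodesic, one obtains covers whose multiplicity is bounded independently of $n$ and $R$. This verifies the defining condition for $\operatorname{asdim}\Gamma(G,S\cup\mH) < \infty$.
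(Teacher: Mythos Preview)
The paper does not give its own proof of this theorem: it is quoted verbatim as \cite[Theorem~17]{Os_FiniteAsdim} and used as a black box, so there is no in-paper argument to compare against. Your proposal is therefore an attempt to reconstruct Osin's proof rather than the authors'.

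On the substance of your sketch: the first two steps are sound. The relative Cayley graph is indeed $\delta$-hyperbolic---this is exactly the content of the linear relative isoperimetric inequality---and the Roe-style shadow/annulus cover is the natural template. You have also correctly located the genuine difficulty: the relative Cayley graph has infinite-valence vertices coming from the peripheral cosets, so the standard multiplicity bound (which in the locally finite case comes from counting how many $R$-separated points on a sphere can cast overlapping shadows) does not go through automatically.

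Where the proposal becomes thin is precisely at the point where the real work lies. Saying that peripheral cliques behave as ``fat points'' and invoking Bowditch fineness to ``control how such cliques can accumulate along a radial geodesic'' names the right phenomena but does not yet constitute an argument. What actually needs to be established is a quantitative statement of the following flavour: along any geodesic from the basepoint, the number of distinct peripheral cosets that the geodesic penetrates to $S$-depth exceeding a fixed threshold, within a window of $d_{S\cup\mH}$-length $R$, is bounded in terms of $R$ and $\delta$ alone. Results of exactly this type are the content of the lemmas the paper quotes as Lemmas~\ref{lem:L} and~\ref{lem:epsilon} (the constants $L$ and $\varepsilon(s)$), and Osin's proof of the cited theorem relies on them in an essential way. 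Your outline would be convincing once you show explicitly how those estimates feed into the multiplicity bound for your shadow cover; as written, the crucial step is asserted rather than proved.
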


The existence of constants $L$ and $\varepsilon$ involved in the following two lemmas (from \cite{Os_FiniteAsdim}) will 
be necessary in what follows, though the results themselves will not be mentioned again.  The terminology and notation is taken from~\cite{Os_FiniteAsdim}.

\begin{lem}\label{lem:L}
Suppose that a group $G$ is generated by a finite set $S$ and is hyperbolic relative to $\left\{ H_i \right\}_{i=1}^{k}$.  Then there
is a constant $L > 0$ such that for every cycle $q$ in $\Gamma(G, S\cup \mH)$, every $i \in \{1, \dots, k\}$, and every set of isolated
$H_i$-components $p_1, \ldots, p_m$ of $q$, we have
	\[ \sum_{j=1}^{m} d_S\left( (p_j)_{-}, (p_j)_{+} \right) \leq L \| q \|. \]
\end{lem}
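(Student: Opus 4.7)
The plan is to deduce the stated isoperimetric-type inequality from the relative linear isoperimetric inequality that defines relative hyperbolicity. Because $q$ is a cycle in $\Gamma(G,S\cup\mH)$, its label is a word $w$ representing the identity in $G$ with $\|w\| = \|q\|$, so the definition gives $\mathrm{Area}_{rel}(w)\leqs K\|q\|$. I would then fix a minimal-area van Kampen diagram $\Delta$ over the relative presentation $\langle S\cup\mH \mid \mathcal{R}\rangle$, which has at most $K\|q\|$ two-cells labeled by relators from $\mathcal{R}$, and whose boundary is the cycle $q$. The other, infinitely many, two-cells of the diagram correspond to the trivial relations among elements of a single parabolic $H_i$; edges labeled by letters in $\mH$ come in $H_i$-connected groups called $H_i$-components.

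Given an isolated $H_i$-component $p_j$ of $q$, its endpoints $(p_j)_-$ and $(p_j)_+$ are connected inside $\Delta$ by a path, obtained by replacing $p_j$ with a concatenation of boundary arcs of $\mathcal{R}$-cells, each arc labeled by a word in $S\cup\mH$. The isolation hypothesis is used here: no $\mH$-letter along this replacement path can belong to the same $H_i$-coset as $p_j$ (otherwise the two $H_i$-components could be amalgamated in $\Delta$, contradicting either the isolation of $p_j$ or the minimality of $\Delta$). Thus, after repeatedly pushing any remaining $\mH$-subwords into single letters, one obtains a word in $S\cup\mH$ representing the same element of $G$ as the label of $p_j$, whose $\mH$-letters lie in parabolics different from the coset of $p_j$. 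In particular, the $S$-part of this word already witnesses $d_S((p_j)_-,(p_j)_+)$ up to a constant, and its total length is bounded by $M$ times the number of $\mathcal{R}$-cells whose boundary meets $p_j$ in $\Delta$, where $M$ is the maximum length of a relator in $\mathcal{R}$.

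The main obstacle, and the crux of the argument, is the bookkeeping: one must show that across \emph{all} isolated components $p_1,\ldots,p_m$ simultaneously, each $\mathcal{R}$-cell contributes to the above count a uniformly bounded number of times. This follows from the fact that each relator involves only boundedly many parabolic letters, so a single cell can be adjacent to at most boundedly many distinct components of $q$. Summing the per-component estimate and using $\mathrm{Area}_{rel}(w)\leqs K\|q\|$, we get
\[
 \sum_{j=1}^{m} d_S\bigl((p_j)_-,(p_j)_+\bigr) \leqs C\cdot M\cdot \mathrm{Area}_{rel}(w) \leqs C M K \|q\|,
\]
so the lemma holds with $L = CMK$. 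I expect the delicate step to be extracting the $S$-length bound from a replacement path that may still contain parabolic letters; this is where the isolation hypothesis must be invoked globally, not merely locally, to avoid double-counting of cells between different components.
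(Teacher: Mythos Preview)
The paper does not give its own proof of this lemma: it is quoted verbatim from Osin~\cite{Os_FiniteAsdim} (see the sentence immediately preceding Lemma~\ref{lem:L}), and no argument appears in the present article. So there is nothing here to compare your proposal against; the actual proof lives in Osin's paper (and, in a more general form, in his monograph~\cite{Os_book}).

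That said, your sketch follows the correct overall architecture---van Kampen diagram over the relative presentation, bound the number of $\mathcal{R}$-cells by the linear relative isoperimetric inequality, replace each isolated component by a path running along boundaries of $\mathcal{R}$-cells---but one step is not right as stated. You write that ``the $S$-part of this word already witnesses $d_S((p_j)_-,(p_j)_+)$ up to a constant.'' It does not: the replacement path is a word in $S\cup\mH$, and its $\mH$-letters (even if they lie in parabolics other than the coset of $p_j$) can represent group elements of arbitrarily large $S$-length in general. What saves the argument is that these $\mH$-letters are not arbitrary: they occur as subwords of relators in the \emph{finite} set $\mathcal{R}$, hence lie in a fixed finite subset $\Omega\subset G$, and therefore have uniformly bounded $S$-length. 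This is exactly how Osin proceeds: he first bounds $\sum_j |(p_j)_-^{-1}(p_j)_+|_\Omega$ and then passes to $d_S$ using finiteness of $\Omega$. Your bookkeeping paragraph is fine once this correction is made; the ``each $\mathcal{R}$-cell is counted boundedly often'' step is essentially that each $\mathcal{R}$-cell has at most $M$ boundary edges and hence contributes to at most $M$ components' replacement paths.
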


\begin{lem}\label{lem:epsilon}
Suppose that a group $G$ is generated by a finite set $S$ and is hyperbolic relative to $\left\{ H_i \right\}_{i=1}^{k}$.  Then
for any $s \geq 0$, there is a constant $\varepsilon = \varepsilon(s) \geq 0$ such that the following condition holds.  
Let $p_1$ and $p_2$ be two geodesics in $\Gamma(G, S\cup \mH)$ such that $d_S\left( (p_1)_{-}, (p_1)_{+} \right) \leq s$
and $d_S\left( (p_2)_{-}, (p_2)_{+} \right) \leq s$.  Let $c$ be a component of $p_1$ such that 
$d_S\left( c_{-}, c_{+} \right) \geq \varepsilon$.  Then there is a component of $p_2$ connected to $c$.
\end{lem}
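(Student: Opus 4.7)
The plan is to argue by contradiction via Lemma~\ref{lem:L}. Assume that for some fixed $s$ no admissible $\varepsilon$ exists; then for each $n$ we may select geodesics $p_1^n, p_2^n$ satisfying the hypotheses together with $H_i$-components $c^n \subset p_1^n$ with $d_S(c^n_-, c^n_+) \geq n$, yet with $c^n$ not connected to any $H_i$-component of $p_2^n$. (As written, the statement implicitly requires a compatibility between the endpoints of $p_1$ and $p_2$---typically that they form a geodesic bigon, so $(p_1)_\pm = (p_2)_\pm$---without which the conclusion fails; I assume this tacit hypothesis.)

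The first step is to close $p_1^n \cdot \bar p_2^n$ into a cycle $q^n$ by attaching an $S$-path of length at most $s$ between each pair of corresponding endpoints. Because each $p_j^n$ is a relative geodesic with $d_{S\cup\mH}((p_j^n)_-, (p_j^n)_+) \leq d_S((p_j^n)_-, (p_j^n)_+) \leq s$, we have $\|p_j^n\| \leq s$, so
\[ \|q^n\| \leq 4s. \]

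The second step is to verify that $c^n$ is an \emph{isolated} $H_i$-component of $q^n$. Three facts combine: (i) distinct $H_i$-components of a single relative geodesic are never connected, since otherwise a single $\mH$-edge joining corresponding vertices would yield a strictly shorter replacement for the intermediate subpath, contradicting geodesicity of $p_1^n$; (ii) by the contradiction hypothesis $c^n$ is not connected to any $H_i$-component of $\bar p_2^n$; and (iii) the $S$-bridges contain no $\mH$-edges and hence contribute no components. Lemma~\ref{lem:L} applied to the isolated component $c^n$ of $q^n$ then yields
\[ d_S(c^n_-, c^n_+) \leq L\|q^n\| \leq 4Ls, \]
contradicting $d_S(c^n_-, c^n_+) \geq n$ for $n > 4Ls$. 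Thus $\varepsilon(s) := 4Ls + 1$ works.

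The principal technical obstacle is step (i), the ``no backtracking'' property for relative geodesics. Although intuitive, it deserves an explicit citation from Osin's framework, since verifying that a connecting $\mH$-edge genuinely produces a shorter path requires care about how the shortcut fits into the remainder of $p_1^n$ (in particular, that the resulting path is admissible as a competitor to the original relative geodesic).
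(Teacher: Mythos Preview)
The paper does not give its own proof of this lemma; it is simply quoted from Osin~\cite{Os_FiniteAsdim}. You are right that the statement as printed is defective: with no link between $p_1$ and $p_2$ the conclusion cannot hold. The intended hypothesis (this is Osin's Lemma~2.7) is that the \emph{corresponding} endpoints are close, namely $d_S\big((p_1)_-, (p_2)_-\big) \leq s$ and $d_S\big((p_1)_+, (p_2)_+\big) \leq s$; the printed version is a transcription error, not a missing side condition to be supplemented.

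Your argument, however, leans on both the literal printed hypothesis $d_S\big((p_j)_-, (p_j)_+\big) \leq s$ \emph{and} an added bigon assumption. Under that combination the proof is fine. But under Osin's actual hypothesis the key step collapses: you can no longer conclude $\|p_j^n\| \leq s$, since the relative lengths of $p_1$ and $p_2$ are unconstrained. Hence $\|q^n\|$ is not bounded by any function of $s$ alone, and the inequality $d_S(c^n_-, c^n_+) \leq L\|q^n\|$ from Lemma~\ref{lem:L} yields no uniform $\varepsilon(s)$. Osin's actual proof needs more than Lemma~\ref{lem:L}: it uses the hyperbolicity of $\Gamma(G, S\cup\mH)$ to control how $p_1$ and $p_2$ fellow-travel, and then argues locally near the component $c$. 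So your contradiction scheme via Lemma~\ref{lem:L} is the right instinct, but it only settles the (much easier) special case in which the geodesics themselves are short; the general statement needed for the application in Lemma~\ref{lem:BallFDC} requires the stronger input.
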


\section{Extendable properties} \label{extend-sec}
Many properties can be extended from the peripheral subgroups $H_1, \ldots, H_n$ to the group $G$.
Coarse embeddability \cite{DG_UERHG}, exactness \cite{ozawa_RHG}, finite asymptotic dimension \cite{Os_FiniteAsdim},
and combability \cite{JR} are just a few examples of such properties.  An analysis of \cite{DG_UERHG} and \cite{Os_FiniteAsdim}
shows much similarity in method.

Given a countable group $\Gamma$, we will view $\Gamma$ as a metric space with respect to a proper left-invariant metric.  Any two such metrics are coarsely equivalent, and the properties under consideration here are all coarsely invariant, so the choice of metric will not matter.    

Suppose that $\P$ is some property of metric families.  We isolate a few features that may hold for $\P$, which will be of interest.
Recall that a map between metric spaces, $f \co X \to Y$, is \emph{uniformly expansive} if there exists a nondecreasing function
$\rho\co [0,\infty) \to [0,\infty)$ such that for all $x, x' \in X$, $d_Y( f(x), f(x') ) \leq \rho( d_X( x, x' ) )$. 
Such a map is \emph{homogeneous} if for all $y_1, y_2 \in \im(f) \subset Y$ there exist isometries $\phi \co X \to X$
and $\bar \phi \co Y \to Y$ such that
	\begin{itemize}
			\item $f \circ \phi = \bar \phi \circ f$, and
			\item $\bar \phi (y_1) = y_2$.
	\end{itemize}

\begin{lem}\label{homog-lemma}
Let $G$ be a finitely generated group, with finite symmetric generating set $S$, and let $\mH$ be a finite family of
subgroups.  Then the map $p \co G \to \Gamma(G, S \cup \mH)$, which sends a group element to the vertex it represents,
is homogeneous.
\end{lem}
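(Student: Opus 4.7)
\smallskip

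The plan is to verify the definition directly using left multiplication, which is the natural source of isometries in this setting. Every vertex of $\Gamma(G, S \cup \mH)$ is labeled by a group element, so the image of $p$ is exactly the vertex set, and the map $p$ is a bijection onto this vertex set.

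Given $y_1, y_2 \in \im(p)$, let $g_1, g_2 \in G$ be the corresponding group elements and set $g = g_2 g_1^{-1}$. I would define $\phi \co G \to G$ to be left multiplication by $g$, and $\bar\phi \co \Gamma(G, S\cup\mH) \to \Gamma(G, S\cup\mH)$ to be left multiplication by $g$ on the Cayley graph (extended linearly over edges). Since the word metric on $G$ with respect to $S$ is left-invariant, $\phi$ is an isometry of $G$. Since the edges of $\Gamma(G, S\cup\mH)$ are determined by right multiplication by generators in $S\cup\mH$, left multiplication by $g$ preserves the edge set, hence is a graph automorphism and an isometry of $\Gamma(G, S\cup\mH)$.

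Finally I would check the two required conditions: for any $x \in G$,
\[
p(\phi(x)) = p(gx) = gx \cdot v_0 = \bar\phi(p(x)),
\]
(where $v_0$ denotes the identity vertex) so $p \circ \phi = \bar\phi \circ p$, and $\bar\phi(y_1)$ is the vertex labeled by $g g_1 = g_2 g_1^{-1} g_1 = g_2$, which is $y_2$.

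There is no real obstacle here; the result is essentially the observation that left multiplication acts by isometries on both the group (with a left-invariant metric) and on any of its Cayley graphs, and that the projection from the group to the vertex set of a Cayley graph is equivariant.
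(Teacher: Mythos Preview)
Your proof is correct and follows essentially the same approach as the paper: both define $\phi$ and $\bar\phi$ to be left multiplication by $g_2 g_1^{-1}$ and use equivariance of $p$ to verify the two conditions. Your version is in fact more detailed, since you explicitly justify why left multiplication gives isometries on both sides, whereas the paper leaves this implicit.
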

\begin{proof}
Let $g, g' \in G$.  Denote by $v_g$ and $v_{g'}$ the vertices in $\Gamma(G, S \cup \mH)$ identified with $g$ and $g'$, respectively.  
As $p$ is equivariant with respect to left multiplication in $G$,    
 we define $\phi \co G \to G$ and $\bar \phi \co \Gamma(G, S \cup \mH) \to \Gamma(G, S \cup \mH)$
through left multiplication by the element $g' g^{-1}$.  Thus $\bar \phi \left( g \right) =  g' $, and $p \circ \phi = \bar \phi \circ p$.
\end{proof}

There are several versions of the Fibering Theorem.  We will establish the following version for straight finite decomposition complexity in Section~\ref{sFDC-sec}.  Recall that  we say a metric space $X$ has $\P$ if the family $\{ X \}$ has $\P$.

\begin{defn}[Homogeneous Fibering Theorem]\label{defn:PFibers}
Say that \emph{$\P$ satisfies the Homogeneous Fibering Theorem} if the following holds.
\begin{quote}
	Let $f\co E \to B$ be a uniformly expansive, homogeneous map.  Assume $B$ has property $\P$
        and for each bounded subset $D \subset B$, the inverse image $f^{-1}(D)$ has property $\P$.
	Then $E$ has property $\P$.
\end{quote}
\end{defn}

A significantly weaker version of the above will suffice for studying extendability.  We say that a map $f\co X\to Y$ of metric spaces is \e{contractive}, or a \e{contraction}, if $d(f(x), f(y))\leq d(x,y)$ for all $x,y\in X$.  Such maps are uniformly expansive.

\begin{defn}[Transitive Fibering Theorem]
Say that \emph{$P$ satisfies the Transitive Fibering Theorem} if the following holds.
\begin{quote}
	Let $\Gamma$ be a countable group acting isometrically on $E$ and $B$, and assume $B$ has finite asymptotic dimension and that $\Gamma$ acts transitively on $B$.
	Let $f: E \to B$ be a contractive, $\Gamma$--equivariant map.   If for each bounded subset $D \subset B$,  $f^{-1}(D)$ has property $\P$, then $E$ has property $\P$.
\end{quote}
\end{defn}

We note that the maps $p$ considered in the Transitive Fibering Theorem are automatically homogeneous, since $\Gamma$ is acting by isometries.

\begin{defn}[Finite Union Theorem]
Say that \emph{$\P$ satisfies the Finite Union Theorem} if the following holds.
\begin{quote}
Let $X$ be a metric space written as a finite union of metric subspaces $X = \cup_{i = 1}^{n} X_i$.  If each 
$X_i$ has $\P$ then so does $X$.
\end{quote}
\end{defn}

The next property addresses more general unions.  Recall that two subsets $A, B$ of a metric space $X$ are said to be $r$--disjoint if $d(A,B) > r$.

\begin{defn}[Union Theorem]
Say that \emph{$\P$ satisfies the Union Theorem} if the following holds.
\begin{quote}
Let $X$ be a metric space written as a union of metric subspaces $X = \cup_{i \in \I} X_i$.  Suppose that
$\{ X_i \}_{i \in \I}$ has $\P$ and that for every $r > 0$ there exists a metric subspace $Y(r) \subset X$ with $\P$
such that the sets $Z_i(r) = X_i \setminus Y(r)$ are pairwise $r$--disjoint.  Then $X$ has $\P$.
\end{quote}
\end{defn}

\begin{defn}[Coarse Inheritance]
Say that \emph{$\P$ satisfies Coarse Inheritance} if the following holds.
\begin{quote}
Let $X$ and $Y$ be metric spaces.  If there is a coarse embedding from $X$ to $Y$ and $Y$ has $\P$, then so does $X$.
\end{quote}
\end{defn}

Note that if $\P$ satisfies Coarse Inheritance, then it is a coarsely invariant property.

\begin{defn}
Say that $\P$ is \e{axiomatically extendable} if it satisfies the  Transitive Fibering Theorem, the Finite Union Theorem, the Union Theorem, and Coarse Inheritance, 
and every metric space with finite asymptotic dimension has $\P$.
\end{defn}

\begin{prop}
Coarse embeddability, exactness, and finite decomposition complexity $($see Definition~\ref{FDC-def}$)$ are axiomatically extendable properties.
\end{prop}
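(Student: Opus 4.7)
The plan is to verify each of the five conditions in the definition of axiomatic extendability --- Coarse Inheritance, the Finite Union Theorem, the Union Theorem, the Transitive Fibering Theorem, and containment of all spaces with finite asymptotic dimension --- for each of the three properties. For coarse embeddability, exactness, and FDC, the required permanence theorems have mostly been established in the literature, so the proof will largely consist of collecting references and packaging the Fibering Theorems in the weaker transitive form used here.

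For the FAD condition, FDC is by construction a generalization of FAD (the one-step decomposition into uniformly bounded pieces terminates immediately); coarse embeddability follows from FAD via the standard coarse embedding into a Hilbert space; and exactness follows from FAD by work of Higson--Roe. Coarse Inheritance is immediate for coarse embeddability (compose coarse embeddings), and is in the literature for exactness (Dadarlat--Guentner) and FDC (Guentner--Tessera--Yu). The Finite Union Theorem and the Union Theorem are also established for all three properties in the same references; this is surveyed in~\cite{Guentner}.

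For the Transitive Fibering Theorem, I would exploit that each of the three properties enjoys the following stronger \emph{Fibering Theorem} in the literature: if $f \co X \to Y$ is uniformly expansive, $Y$ has $\P$, and for each $r > 0$ the family $\{ f^{-1}(B_r(y)) : y \in Y \}$ has $\P$, then $X$ has $\P$. To deduce the Transitive Fibering Theorem from this, observe that in its hypotheses $B$ has finite asymptotic dimension and hence has $\P$, and a contractive map is uniformly expansive. Moreover, for fixed $r$, the $\Gamma$-equivariance of $f$ combined with the transitive isometric action on $B$ provides isometries between $f^{-1}(B_r(b))$ and $f^{-1}(B_r(b'))$ for any $b,b' \in B$, so the family $\{f^{-1}(B_r(b))\}_{b \in B}$ consists of pairwise isometric copies of the single space $f^{-1}(B_r(b_0))$, which has $\P$ by hypothesis (being the preimage of a bounded set). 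Hence this family has $\P$, the standard Fibering Theorem applies, and $E$ has $\P$.

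The main obstacle, such as it is, is simply to locate a precise reference for the Fibering Theorem in a form directly applicable above for each of the three properties; the coarse embeddability version is typically phrased via equi-coarse embeddability into a common Hilbert space, which agrees with the family formulation used here. With these references in hand, all five axioms are verified for each of coarse embeddability, exactness, and FDC.
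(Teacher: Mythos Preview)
Your proposal is correct and follows essentially the same approach as the paper: both proofs verify the five axioms by collecting references from \cite{DG_UERHG}, \cite{GTY_FDC}, and related sources. The only notable difference is that you spell out explicitly how the Transitive Fibering Theorem is deduced from the stronger family-version Fibering Theorems in the literature (using transitivity of the $\Gamma$--action to get pairwise isometric preimages), whereas the paper simply cites the relevant corollaries as special cases; your added detail is correct and harmless.
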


Coarse embeddability and exactness for metric families are defined in~\cite[Definitions 2.2 and 2.8]{DG_UERHG}, where they are referred to as `equi-embeddability' and `equi-exactness'.

\begin{proof}
For coarse embeddability, the Coarse Inheritance property is clear.   The Finite Union Theorem and the Union Theorem are Corollaries 4.5 and  4.6 of \cite{DG_UERHG}.  The Transitive Fibering Theorem is a special case of Corollary 4.7 of \cite{DG_UERHG}.  Finally,
spaces of finite asymptotic dimension are coarsely embeddable \cite{RoeLCG}.

We now turn to exactness.  Again, the Coarse Inheritance property follows easily from the definition.
Metric spaces of finite asymptotic dimension are exact, by Proposition 4.3 of \cite{DG_UERHG}.  The Finite Union Theorem, Union Theorem, and  Transitive Fibering Theorem come from Corollaries 4.5, 4.6, and 3.4 of \cite{DG_UERHG}.

For finite decomposition complexity, Coarse Inheritance, the Finite Union Theorem, the Union Theorem, and a stronger version of the Fibering Theorem appear in Section 
3.1 of \cite{GTY_FDC}. 
That spaces of finite asymptotic dimension have finite decomposition complexity is proven in~\cite[Section 4]{GTY_FDC}, using \cite{DZ-FAD}.
\end{proof}

\begin{thm}\label{thm:extending}
Suppose that $\P$ is an axiomatically extendable property.  If $G$ is relatively hyperbolic with respect to $H_1, \ldots, H_n$ and each $H_i$ has $\P$, 
then $G$ has $\P$.
\end{thm}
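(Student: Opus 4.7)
The plan is to apply the Transitive Fibering Theorem to the canonical map $p\co G \to \Gamma(G, S\cup\mH)$ sending each group element to the corresponding vertex. The group $G$ acts by isometries on both sides via left multiplication, this action is transitive on the vertices of $\Gamma(G, S\cup\mH)$, the map $p$ is $G$-equivariant, and $p$ is a contraction because $S\subset S\cup\mH$ forces $|g|_{S\cup\mH}\leq |g|_S$. By Theorem~\ref{thm:relCayfad}, $\Gamma(G,S\cup\mH)$ has finite asymptotic dimension, so all hypotheses of the Transitive Fibering Theorem are in place apart from the preimage condition. Since bounded subsets of $\Gamma(G,S\cup\mH)$ sit inside balls, and $\P$ satisfies Coarse Inheritance, verifying this condition reduces to showing that
\[
B_r \;:=\; \{g\in G : |g|_{S\cup\mH} \leq r\},
\]
viewed as a subspace of $G$ with the $S$-metric, has $\P$ for every $r\geq 0$.

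I will prove this by induction on $r$. The base case $B_0=\{1\}$ has finite asymptotic dimension and therefore $\P$. For the inductive step, observe that
\[
B_r \;=\; \bigcup_{s\in S\cup\{1\}} B_{r-1}\cdot s \;\cup\; \bigcup_{i=1}^{n} B_{r-1}\cdot H_i.
\]
For each $s$, the bijection $B_{r-1}\to B_{r-1}\cdot s$, $g\mapsto gs$, distorts distances by at most $2|s|_S$, so it is a coarse equivalence, and Coarse Inheritance together with the inductive hypothesis give that $B_{r-1}\cdot s$ has $\P$. Each $B_{r-1}\cdot H_i$ decomposes as a disjoint union of full left cosets, $B_{r-1}\cdot H_i = \bigsqcup_{j\in J} g_j H_i$, where each coset is isometric to $H_i$ via left multiplication and hence has $\P$; the family $\{g_j H_i\}$ thus satisfies $\P$ uniformly. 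I will handle this piece with the Union Theorem, and then the Finite Union Theorem assembles everything into $B_r$.

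To apply the Union Theorem I need, for each $R>0$, a subspace $Y(R)\subset B_{r-1}\cdot H_i$ with $\P$ whose complements $g_j H_i \setminus Y(R)$ are pairwise $R$-disjoint. I propose
\[
Y(R) \;:=\; \{x \in B_{r-1}\cdot H_i : d_S(x, B_{r-1}) \leq D\}
\]
for a suitable constant $D=D(R,r)$. Since $Y(R)\subset N_D(B_{r-1})$, a nearest-point retraction $Y(R)\to B_{r-1}$ is a $(1,2D)$-quasi-isometric embedding and hence a coarse embedding, so $Y(R)$ inherits $\P$ from $B_{r-1}$ via Coarse Inheritance. The required $R$-disjointness then amounts to the following separation claim: whenever $g_1 H_i\ne g_2 H_i$ with $g_1, g_2\in B_{r-1}$ and $x\in g_1 H_i$, $y\in g_2 H_i$ satisfy $d_S(x,y)\leq R$, one has $d_S(x,g_1)\leq D$ and $d_S(y,g_2)\leq D$.

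The main obstacle is establishing this separation claim, which is where relative hyperbolicity enters in an essential way. Given such $x, y$, I will form a cycle $q$ in $\Gamma(G, S\cup\mH)$ by concatenating $(S\cup\mH)$-geodesics $1\to g_1$ and $g_2\to 1$ of length at most $r-1$ each, an $S$-geodesic $x\to y$ of length at most $R$, and the two $H_i$-edges $g_1\to x$ and $y\to g_2$; the cycle has length at most $2r+R$. The two distinguished $H_i$-edges lie in the distinct cosets $g_1 H_i$ and $g_2 H_i$, so they are not $H_i$-connected to each other; after modifying the $(S\cup\mH)$-geodesic portions (using Lemma~\ref{lem:epsilon} to reroute through coset representatives in order to absorb any $H_i$-components of the geodesics that are connected to the two distinguished edges) I may assume that both distinguished edges are \emph{isolated} $H_i$-components of $q$. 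Lemma~\ref{lem:L} then yields $d_S(g_1,x)+d_S(g_2,y)\leq L(2r+R)$, providing the desired constant $D$. Executing this isolation step cleanly is the most delicate point; it mirrors the heart of Osin's proof of Theorem~\ref{thm:relCayfad}.
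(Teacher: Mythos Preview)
Your proposal is correct and follows essentially the same route as the paper: apply the Transitive Fibering Theorem to $p\co G\to \Gamma(G,S\cup\mH)$ (the paper restricts the codomain to $\mathrm{Im}(p)$ to get literal transitivity, a small point you should make explicit), and prove by induction that each $B(n)$ has $\P$ via the same decomposition $B(n)=\bigcup_s B(n-1)s\cup\bigcup_i B(n-1)H_i$ and the Union Theorem applied to the cosets in $B(n-1)H_i$. The only substantive difference is that where the paper simply cites Osin's Lemma~3.2 for the $s$--disjointness of $\{rH_i\setminus Y_s\}$, you sketch its proof; your cycle argument is the right idea, though your invocation of Lemma~\ref{lem:epsilon} as a ``rerouting'' device is not quite how that lemma is used in Osin's argument (it is used in contrapositive form to bound the $d_S$--length of components, not to modify the geodesics).
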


We begin by proving an auxiliary lemma.
Let 
$$B(n) = \left\{ g \in G \, : \, d_{S\cup\mH}(e,g) \leq n \right\}.$$  That is, $B(n)$ is the closed ball around $e$ of radius $n$ in $\Gamma(G, S\cup\mH)$.
We consider $B(n)$ as a metric subspace of $G$, with the word metric associated to $S$.

\begin{lem}\label{lem:BallFDC}
Suppose that each $H_i$ has $\P$.  For any  integer $n > 0$, $B(n)$ has $\P$.
\end{lem}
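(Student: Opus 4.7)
The plan is to induct on $n$. The base case $B(0)=\{e\}$ has finite asymptotic dimension, hence $\P$. For the inductive step, every $g\in B(n)$ factors as $g' x$ with $g'\in B(n-1)$ and $x\in \{e\}\cup S\cup \mH$, yielding the covering
\[
B(n) \;=\; B(n-1)\cup \bigcup_{s\in S} B(n-1)\,s \cup \bigcup_{i=1}^{k} B(n-1)\,H_i.
\]
The Finite Union Theorem reduces the task to showing each piece has $\P$. The first is immediate by induction; each $B(n-1)\,s$ is coarsely equivalent to $B(n-1)$, since right multiplication by $s$ distorts $d_S$ by an additive $2|s|_S\le 2$, and so has $\P$ by Coarse Inheritance. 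The substantive step is $B(n-1)\,H_i$.

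For this I would apply the Union Theorem. Decompose $B(n-1)\,H_i=\bigcup_{\alpha} g_\alpha H_i$ over coset representatives $g_\alpha$, one per $H_i$-coset meeting $B(n-1)$, chosen to minimize $|\cdot|_{S\cup\mH}$ on their cosets. Minimality forces $g_\alpha\in B(n-1)$ and prevents any $S\cup\mH$-geodesic $e\to g_\alpha$ from ending with an $H_i$-edge, since its other endpoint would be a strictly closer representative. Left translation is a $d_S$-isometry, so each $g_\alpha H_i$ is isometric to $H_i$, and the family $\{g_\alpha H_i\}_\alpha$ has $\P$. Given $r>0$, with $L$ the constant from Lemma~\ref{lem:L}, set $C=L(2n+r+2)$ and
\[
Y(r)\;=\;\bigl\{\,g\in B(n-1)\,H_i\;:\;d_S(g,B(n-1))\le C\,\bigr\}.
\]
Since $B(n-1)\subseteq Y(r)$ and $Y(r)$ sits within $d_S$-distance $C$ of $B(n-1)$, the nearest-point projection $Y(r)\to B(n-1)$ is a coarse equivalence, so Coarse Inheritance yields that $Y(r)$ has $\P$.

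The core technical step is the $r$-disjointness of $\{g_\alpha H_i\setminus Y(r)\}_\alpha$ in $d_S$. Given $h_\alpha\in g_\alpha H_i$ and $h_\beta\in g_\beta H_i$ with $\alpha\ne\beta$ and $d_S(h_\alpha,h_\beta)\le r$, I would form a cycle $q$ in $\Gamma(G,S\cup\mH)$ from an $S\cup\mH$-geodesic $e\to g_\alpha$, the $H_i$-edge $g_\alpha\to h_\alpha$, an $S$-geodesic $h_\alpha\to h_\beta$, the $H_i$-edge $h_\beta\to g_\beta$, and a reversed geodesic $g_\beta\to e$, so that $\|q\|\le 2n+r+2$. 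The minimal choice of $g_\alpha,g_\beta$ together with the single-sojourn property of $\Gamma(G,S\cup\mH)$-geodesics ensures that the adjoining geodesics contain no $H_i$-edges in $g_\alpha H_i$ or $g_\beta H_i$; any stray such edges arising from the opposite geodesic can be eliminated by a standard shortcut argument that only shortens $q$. With the $H_i$-components containing $g_\alpha h_\alpha$ and $h_\beta g_\beta$ now isolated, Lemma~\ref{lem:L} bounds $d_S(g_\alpha,h_\alpha)\le L\|q\|\le C$, so $h_\alpha\in Y(r)$; symmetrically $h_\beta\in Y(r)$, a contradiction. The Union Theorem then yields $B(n-1)\,H_i$ has $\P$, closing the induction. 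The main obstacle is this isolation bookkeeping, which is where the technical content of Osin's lemmas meets the inductive geometry of balls in the relative Cayley graph.
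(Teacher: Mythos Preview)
Your induction and use of the Finite Union and Union Theorems match the paper's proof exactly; the paper also writes $B(n)=\bigl(\bigcup_i B(n-1)H_i\bigr)\cup\bigl(\bigcup_{s\in S}B(n-1)s\bigr)$, handles the $B(n-1)s$ pieces by coarse equivalence, and then applies the Union Theorem to the coset decomposition of each $B(n-1)H_i$.

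The difference is in the disjointness step. The paper does not argue this directly: it takes $Y_s=B(n-1)T_s$ with $T_s=\{g:d_S(e,g)\le\max\{\varepsilon(s),2L(s+1)\}\}$ (a constant independent of $n$, using \emph{both} Lemma~\ref{lem:L} and Lemma~\ref{lem:epsilon}) and simply cites Osin's Lemma~3.2 for the $s$--disjointness of $\{rH_i\setminus Y_s\}$. Your approach instead builds an explicit cycle of length $\le 2n+r$ through $g_\alpha,h_\alpha,h_\beta,g_\beta$ and appeals only to Lemma~\ref{lem:L}, at the cost of a constant $C=L(2n+r+2)$ depending on $n$. That is a genuine, and viable, alternative: for the induction you only need $Y(r)$ to have $\P$ for each fixed $n$, so the $n$--dependence is harmless, and you avoid Lemma~\ref{lem:epsilon} entirely.

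Your ``standard shortcut argument'' is the one place that needs more than a gesture. The minimal choice of $g_\alpha$ does force the geodesic $e\to g_\alpha$ to avoid $g_\alpha H_i$ except at its endpoint (via the single--sojourn property you mention), so $e_\alpha$ is its own component. But the opposite geodesic $e\to g_\beta$ may legitimately contain an $H_i$--component $c'$ in $g_\alpha H_i$, and you cannot simply ``eliminate'' it while keeping $h_\alpha$ in the cycle. What works is this: since a geodesic meets each coset in at most one interval, there is at most one such $c'$; taking its endpoint $v\in g_\alpha H_i$ nearest $g_\beta$, the cycle $v\xrightarrow{H_i}h_\alpha\to h_\beta\to g_\beta\to v$ has length $\le n+r+1$ and the edge $v\to h_\alpha$ is now isolated, so Lemma~\ref{lem:L} gives $d_S(v,h_\alpha)\le L(n+r+1)\le C$ with $v\in B(n-1)$, whence $h_\alpha\in Y(r)$. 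With this made explicit, your argument is complete.
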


Note that the word metric on $G$ restricts to give a proper left-invariant metric on each $H_i$, and we will choose this as our metric on $H_i$.

\begin{proof}  The argument is based on the proof of~\cite[Lemma 3.2]{Os_FiniteAsdim}.
Proceed by induction on $n$.  
For $n = 1$, $B(1) = S \cup \left( \cup_{i=1}^{k} H_i \right)$ has $\P$ by the Finite Union Theorem.
Let $n > 1$ and assume $B(m)$ has $\P$ for all positive integers $m < n$.  We have
\[ B(n) = \left( \bigcup_{i=1}^k B(n-1) H_i \right) \cup \left( \bigcup_{s \in S} B(n-1) s \right). \]
As each $B(n-1)s$ is coarsely equivalent to $B(n-1)$ and $S$ is finite, $\bigcup_{s \in S} B(n-1) s$
has $\P$ by the Finite Union Theorem and the induction hypothesis.  It remains to check that $\bigcup_{i=1}^k B(n-1) H_i$ has $\P$.

Fix   $i \in \{1, \dots, k\}$ and let $R(n-1)$ be a subset of $B(n-1)$ such that
 \[ B(n-1) H_i = \bigsqcup_{r \in R(n-1)} r H_i. \]

Fix an $s > 0$ and set 
\[ T_s = \left\{ g \in G \, : \, d_S( e, g ) \leq \max\{ \varepsilon, 2L(s+1) \} \right\}, \]
where $L$ and $\varepsilon = \varepsilon(s)$ are the constants from Lemmas \ref{lem:L} and \ref{lem:epsilon} respectively.
Let $Y_s = B(n-1)T_s$.
As $T_s$ is finite, $Y_s$ has $\P$.  Osin shows in \cite[Lemma 3.2]{Os_FiniteAsdim} that the sets 
$\{ r H_i \setminus Y_s \, : \, r \in R(n-1) \}$ are $s$--disjoint, so $B(n-1)H_i$ has $\P$ by the Union Theorem.  The Finite Union Theorem then shows $\bigcup_{i=1}^k B(n-1) H_i$ has $\P$.
\end{proof}

\begin{proof}[Proof of Theorem \ref{thm:extending}]
Consider the map $p  \co G \to \Gamma(G, S\cup\mH)$.  This is a contraction, thus it is uniformly expansive.  
By Theorem  \ref{thm:relCayfad}, $\Gamma(G, S\cup\mH)$ has finite asymptotic dimension, so $\Gamma(G, S\cup\mH)$ has the property $\P$ as well.

For each bounded subset $Z$ of $\Gamma(G, S\cup\mH)$ there is an $n$ such that $p^{-1}(Z)$
lies in $B(n)$.  By Lemma~\ref{lem:BallFDC},     
$B(n)$ has $\P$, and $p^{-1}(Z)$ has $\P$ as well by Coarse Inheritance.  
Consider the map $p\co G\to \textrm{Image} (p)$, which is equivariant with respect to the transitive left-translation actions of $G$ (in fact, $p$ is simply the identity map on underlying set $G$).
Since  $\Gamma(G, S\cup\mH)$ has finite asymptotic dimension, so does  $\textrm{Im} (p)\subset \Gamma(G, S\cup\mH)$.
By the Transitive Fibering Theorem, 
$G$ has the property $\P$.
\end{proof}

\begin{cor}$\label{FDC-cor}$
Suppose $G$ is relatively hyperbolic with respect to $H_1, \ldots, H_n$.  If each $H_i$ has finite decomposition complexity,
 so does $G$.
\end{cor}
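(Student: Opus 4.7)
The plan is to derive this as an immediate application of Theorem~\ref{thm:extending}. The proposition preceding the theorem asserts that finite decomposition complexity is axiomatically extendable, so we simply take $\P$ to be FDC and feed the hypotheses of the corollary into the theorem. The content of the corollary has therefore been packaged into two inputs: verification of the axiomatic extendability hypotheses for FDC, and the general extendability theorem itself.

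In slightly more detail: to invoke Theorem~\ref{thm:extending}, I would cite the four axiomatic conditions for FDC from Section~3.1 of~\cite{GTY_FDC} (Coarse Inheritance, the Finite Union Theorem, the Union Theorem, and the Fibering Theorem, the last of which implies our weaker Transitive Fibering Theorem), together with the fact from~\cite[Section~4]{GTY_FDC} (using~\cite{DZ-FAD}) that every metric space of finite asymptotic dimension has FDC. All of this is recorded in the proposition stated just before Theorem~\ref{thm:extending}, so the corollary really does reduce to a single-line appeal.

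I do not anticipate any obstacle: the hard work is done by Lemma~\ref{lem:BallFDC} (building FDC for balls in the relative Cayley graph out of FDC of the peripheral subgroups via the Finite Union and Union Theorems) and by the final application of the Transitive Fibering Theorem along the contraction $p\co G\to\Gamma(G,S\cup\mH)$, both inside the proof of Theorem~\ref{thm:extending}. Thus the proposed proof is simply: \emph{By the preceding proposition, FDC is axiomatically extendable, so the statement follows from Theorem~\ref{thm:extending} applied with $\P={}$FDC.}
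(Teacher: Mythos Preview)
Your proposal is correct and matches the paper's own treatment: the corollary is stated without a separate proof, being an immediate consequence of the proposition that FDC is axiomatically extendable together with Theorem~\ref{thm:extending}. Your one-line reduction is exactly what the paper intends.
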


The same argument shows that this result holds with FDC replaced by either of the weak versions ($k$--FDC or wFDC) discussed in the next section, since the 
extendability arguments  
for FDC  in~\cite{GTY_FDC} all apply to these weak versions as well.

\section{Straight finite decomposition complexity} \label{sFDC}

We recall the definition of finite decomposition complexity from \cite{GTY_FDC}.
\begin{defn}
An \emph{$(k, r)$--decomposition} of a metric space $X$ over a metric family $\Y$ is a decomposition
\[ X = X_0 \cup X_1 \cup \cdots \cup X_{k-1} , \,\,\, X_i = \bigsqcup_{r-\textrm{disjoint}} X_{ij}, \]
where each $X_{ij} \in \Y$.  A metric family $\X$ is \emph{$(k, r)$--decomposable} over $\Y$ if every
member of $\X$ admits a $(k, r)$--decomposition over $\Y$.  
\end{defn}

We will write 
$$\X\xmaps{(k, r)} \Y$$
to indicate that $\X$ admits a $(k,r)$--decomposition over $\Y$.      
When $k=2$, we recover the notion of $r$--decomposition from~\cite{GTY_FDC}.  In this case, we will write 
\begin{equation}\label{not}\X \srm{r} \Y\end{equation}
to mean that $\X$ admits an $r$--decomposition over $\Y$.

\begin{remark} $\label{decomp-rmk}$ If $X$ admits a $(k, r)$--decomposition over a metric family $\Y$, then it also admits a $(k', r)$--decomposition over $\Y$ for each $k'\geqs k$, since we may repeat the spaces $X_i$ appearing in the decomposition $($or add copies of the empty set$)$.     
\end{remark}

\begin{defn}
Let $\mathfrak{U}$ be a collection of metric families.  A metric family $\X$ is $k$--\emph{decomposable} over $\mathfrak{U}$
if, for every $r > 0$, there is a metric family $\Y_r \in \mathfrak{U}$ and a $(k,r)$--decomposition of $\X$ over $\Y_r$.      
The collection $\mathfrak{U}$ is \emph{stable under $k$--fold decomposition} if every metric family which $k$--decomposes over
$\mathfrak{U}$ actually belongs to $\mathfrak{U}$.  

A metric family is \emph{weakly decomposable} over $\mathfrak{U}$ if it is $k$--decomposable over $\mathfrak{U}$ for some $k\in \N$.
\end{defn}

Recall that a metric family $\Z$ is \emph{uniformly bounded} if 
\[\sup \{ \textrm{diam}(Z) :  Z \in \Z \} < \infty.\]

\begin{defn}\label{FDC-def}
The collection $\D^k$ of metric families with $k$--fold finite decomposition complexity $($$k$--FDC$)$ is the smallest collection of metric families
that contains the uniformly bounded metric families and is stable under $k$--fold decomposition.  When $k=2$, we recover the notion of FDC from~\cite{GTY-rigid, GTY_FDC}.

The collection $\wD$ of metric families with weak finite decomposition complexity $($wFDC$)$ is the smallest collection of metric families
that contains the uniformly bounded metric families and is stable under weak decomposition.
\end{defn}

\begin{remark} As explained in~\cite{GTY_FDC} and in~\cite[Section 6]{RTY} for the case of FDC, the collections $\D^k$ are     
unions of collections of families $\D^k_\alpha$ indexed by countable  ordinals $\alpha$.   One starts with $\D^k_0 = \B$, the collection of uniformly bounded metric families, and then inductively defines  $\D^k_{\alpha+1}$ to be the set of metric families that $k$--decompose over $\D^k_\alpha$ $($for limit ordinals $\beta$, one may simply set $\D^k_\beta = \bigcup_{\alpha < \beta} \D_\alpha$$)$.  One then checks that the union of the union of the collections $\D^k_\alpha$, taken over all countable ordinals $\alpha$, is stable under  $k$--fold decomposition.    
 The same remark applies to $\wD$.
\end{remark}

By Remark~\ref{decomp-rmk}, we have $\D^1 \subset \D^{2} \subset \D^3 \subset \cdots \subset \wD$.

In \cite{DZ}, Dranishnikov and Zarichnyi give the following generalization of FDC, whose applications to algebraic $K$--theory
have been studied by Goldfarb~\cite{Goldfarb-sFDC}. 

\begin{defn}
A metric family $\X$ has \emph{straight finite decomposition complexity} $($sFDC$)$ if, for every sequence
$R_1 < R_2 < \ldots$ of positive numbers, there exists an $n \in \N$ and metric families $\X_0, \X_1, \X_2, \ldots, \X_n$
such that $\X = \X_0$, the family $\X_i$ is $R_{i+1}$--decomposable over $\X_{i+1}$, and the
family $\X_n$ is uniformly bounded.  
The collection of metric families with sFDC is denoted by $\sD$.

We say that a metric space $X$ has sFDC if the single-element family $\{X\}$ has sFDC.    
\end{defn}

\begin{prop}\label{impls}  Every metric family with weak $FDC$ also has straight $FDC$.  In fact, the class of metric families $\sD$ is stable under weak decomposition.
\end{prop}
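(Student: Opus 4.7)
My plan is to prove the stronger assertion that $\sD$ is stable under weak decomposition. The first claim then follows from the definition of $\wD$ as the smallest class of metric families that contains the uniformly bounded families and is stable under weak decomposition, together with the trivial observation that uniformly bounded families lie in $\sD$.

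So fix $k \geqs 1$, a family $\X$ that is $k$--decomposable over $\sD$, and a sequence $R_1 < R_2 < \cdots$. The heart of the argument is an unfolding lemma: if $\X \xmaps{(k, r)} \Y$, then there exist families $\X = \Z_0, \Z_1, \ldots, \Z_k = \Y$ with $\Z_j \srm{r} \Z_{j+1}$ for every $j$. To prove it, for each $X \in \X$ I fix a decomposition $X = X_0 \cup X_1 \cup \cdots \cup X_{k-1}$ witnessing $\X \xmaps{(k, r)} \Y$, form the nested unions $X^{(j)} = X_j \cup X_{j+1} \cup \cdots \cup X_{k-1}$, set $\X^{(j)} = \{X^{(j)} : X \in \X\}$, and define $\Z_j = \Y \cup \X^{(j)}$ for $1 \leqs j \leqs k-1$. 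The initial transition $\X \srm{r} \Z_1$ comes from the splitting $X = X_0 \cup X^{(1)}$, with $X_0$ an $r$--disjoint union of members of $\Y$ and $X^{(1)}$ a single member of $\X^{(1)}$. The middle transitions $\Z_j \srm{r} \Z_{j+1}$ use the identity $X^{(j)} = X_j \cup X^{(j+1)}$, combined with the trivial decomposition of each $Y \in \Y$ by itself. The final transition $\Z_{k-1} \srm{r} \Y$ holds because each $X^{(k-1)} = X_{k-1}$ is itself an $r$--disjoint union of members of $\Y$.

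To finish, I apply the unfolding lemma with $r = R_k$ and some $\Y_r \in \sD$ satisfying $\X \xmaps{(k, r)} \Y_r$, obtaining a chain $\X \srm{r} \Z_1 \srm{r} \cdots \srm{r} \Z_{k-1} \srm{r} \Y_r$ of length $k$. Since an $r$--decomposition is automatically an $r'$--decomposition for every $r' \leqs r$, this same chain reads as $\X \srm{R_1} \Z_1 \srm{R_2} \cdots \srm{R_k} \Y_r$. Because $\Y_r \in \sD$, the tail sequence $R_{k+1} < R_{k+2} < \cdots$ yields a finite straight decomposition $\Y_r = \mathcal{U}_0 \srm{R_{k+1}} \mathcal{U}_1 \srm{R_{k+2}} \cdots \srm{R_{k+m}} \mathcal{U}_m$ with $\mathcal{U}_m$ uniformly bounded, and concatenating produces the required straight decomposition of $\X$. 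The main obstacle is the unfolding lemma itself; once one sees that the nested unions really do yield genuine $r$--decompositions at each stage, the remainder reduces to the monotonicity of decomposition parameters.
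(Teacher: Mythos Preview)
Your proof is correct and follows essentially the same approach as the paper: your unfolding lemma is precisely the paper's Lemma~\ref{decomp}, proved by the same nested-tail construction (the paper records the individual pieces $X_{ij}$ rather than all of $\Y$, but this is cosmetic), and the concatenation argument with $r=R_k$ and the tail sequence $R_{k+1}<R_{k+2}<\cdots$ matches the paper's proof of Proposition~\ref{impls} exactly.
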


We need the following lemma, which was pointed out to us by Daniel Kasprowski (personal communication).  A similar idea appears in Dydak--Virk~\cite{Dydak-Virk}.

\begin{lem}\label{decomp}    
Let $\X$ and $\Y$ be a metric families such that $\X$ admits a $(k, s)$ decomposition over $\Y$.  There there exists a sequence of decompositions
$$\X = \X_0 \srm{S} \X_1 \srm{S} \cdots \srm{S} \X_{k-1} \srm{S} \X_k = \Y.$$
\end{lem}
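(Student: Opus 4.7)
The plan is to peel off one layer of the given $(k,s)$-decomposition at each step, producing a chain of $k$ ordinary $s$-decompositions. Fix, for each $X\in\X$, the decomposition $X=X_0\cup X_1\cup\cdots\cup X_{k-1}$ with each $X_i=\bigsqcup_{j} X_{ij}$ an $s$-disjoint union of members $X_{ij}\in\Y$, and for $0\leqs i\leqs k-1$ write $T_X^{(i)}=X_i\cup X_{i+1}\cup\cdots\cup X_{k-1}$ for the $i$-th tail, so $T_X^{(0)}=X$ and $T_X^{(k-1)}=X_{k-1}$.

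Now set $\X_0=\X$, $\X_k=\Y$, and for $1\leqs i\leqs k-1$ define
$$\X_i \,=\, \{X_{ab}\,:\,X\in\X,\ 0\leqs a<i\}\,\cup\,\{T_X^{(i)}\,:\,X\in\X\}.$$
To verify $\X_i\srm{s}\X_{i+1}$ for each $0\leqs i\leqs k-1$, I decompose each member of $\X_i$ as a $2$-fold $s$-decomposition over $\X_{i+1}$ in one of two ways. A tail $T_X^{(i)}$ splits as $X_i\cup T_X^{(i+1)}$: the first half is the given $s$-disjoint union of the $X_{ij}\in\X_{i+1}$, and the second is the single element $T_X^{(i+1)}\in\X_{i+1}$, which is trivially $s$-disjoint as a one-term union. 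An already-exposed piece $X_{ab}\in\X_i$ with $a<i$ already lies in $\X_{i+1}$, so I use the trivial $2$-fold decomposition $X_{ab}=X_{ab}\cup\emptyset$. For $i=k-1$, the tail $T_X^{(k-1)}=X_{k-1}$ is by hypothesis an $s$-disjoint union of elements of $\Y=\X_k$, closing the chain.

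There is no substantive geometric obstacle here; the argument is essentially bookkeeping. The only mild subtlety is forcing the final family to equal $\Y$ on the nose: the construction above naturally produces $\X_k$ as the subfamily of $\Y$ consisting of the pieces $X_{ij}$ that actually appear. Since enlarging the target of an $s$-decomposition never destroys the decomposition, one can freely adjoin the remaining elements of $\Y$ to each $\X_i$ with $i\geqs 1$, yielding the desired chain $\X=\X_0\srm{s}\X_1\srm{s}\cdots\srm{s}\X_{k-1}\srm{s}\X_k=\Y$.
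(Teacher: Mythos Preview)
Your argument is correct and is essentially the same as the paper's: both peel off one piece of the $(k,s)$-decomposition at a time, defining $\X_i$ to consist of the already-exposed pieces $X_{ab}$ together with the remaining tail, and then verify the $2$-fold $s$-decomposition step by splitting the tail into its next layer and the shorter tail. Your treatment is slightly more careful than the paper's in that you explicitly address why the terminal family can be taken to be $\Y$ itself rather than merely the subfamily of $\Y$ that actually occurs.
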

\begin{proof} For each $X\in \X$, there exists a $k$--fold decomposition $X = X_1 \cup \cdots \cup X_k  $, with each $X_i$ an $s$--disjoint union 
$$X_i = \bigsqcup_{j\in J_i}^{s-\textrm{disjoint}} X_{ij},$$
such that  $X_{ij}  \in \Y$ for each $i,j$. 
For $l = 1, 2, \ldots, k$, we define $\X_l$ to be the metric family consisting of all the spaces
$X_{ij}$ with $1\leqs i \leqs l$ and $j\in J_i$, together with the space $X_{l+1} \cup \cdots \cup X_k$.  In other words,
$$\X_l   = \bigcup_{X\in \X} \left( \bigcup_{i = 1}^l \{X_{ij} : j\in J_i\} \right) \cup \{X_{l+1} \cup \cdots \cup X_k\}.$$
The desired  decompositions $\X_l  \srm{S} \X_{l+1}$ are obtained by decomposing each $X_{ij}$ with $i\leqs l$ trivially, and decomposing   $X_{l+1} \cup \cdots \cup X_k$ as the union of $X_{l+1}$ and $X_{l+2} \cup \cdots \cup X_k$; we can then further decompose
$$X_{l+1} =  \bigsqcup_{j\in J_{l+1}}^{s-\textrm{disjoint}} X_{(l+1) j},$$
and we can decompose $X_{l+2} \cup \cdots \cup X_k$ trivially.
\end{proof}

\begin{proof}[Proof of Proposition~\ref{impls}]     
We will show that $\sD$ is stable under weak decomposition.  Since $\sD$ contains all uniformly bounded families and $\wD$ is the smallest  collection  of metric families that is stable under weak decomposition and contains all uniformly bounded families, this will imply that $\wD\subset \sD$.

Say $\X$ weakly decomposes over $\sD$.  Then there exists $k\geqs 1$ such that for each $r > 0$, there exists  $\Y(r) \in \sD$ such that
$$\X \xmaps{(k, r)} \Y(r).$$
By Lemma~\ref{decomp}, there exist metric families $\X_1(r), \ldots, \X_{k-1}(r)$ such that
$$\X \srm{r} \X_1(r) \srm{r} \X_2(r)\srm{r} \cdots \srm{r} \X_{k-1}(r)\srm{r} \Y(r).$$

Now consider a sequence $R_1 < R_2 < \cdots$.  Setting $r=R_k$, we have
$$\X \xmaps{R_k} \X_1(R_k)\xmaps{R_k} \X_2(R_k) \xmaps{R_k} \cdots \xmaps{R_k} \X_{k-1}(R_k)\xmaps{R_k} \Y(R_k),$$
and since $R_1, R_2, \ldots, R_{k-1} < R_k$, we in fact have
\begin{equation}\label{first}\X \xmaps{R_1} \X_1(R_k)\xmaps{R_2} \X_2(R_k) \xmaps{R_3} \cdots \xmaps{R_{k-1}} \X_{k-1}(R_k)\xmaps{R_k} \Y(R_k).
\end{equation}
Since $ \Y(R_k) \in \sD$, applying the definition of sFDC to the sequence 
$$R_{k+1} < R_{k+2} < \cdots$$ yields a finite sequence of decompositions of $\Y(R_k)$ ending with a bounded family; that is, for some $n\in \mathbb{N}$ we have
\begin{equation}\label{2nd}\Y(R_k) \xmaps{R_{k+1}} \Z_{k+1} \xmaps{R_{k+2}} \Z_{k+2} \xmaps{R_{k+2}} \cdots \xmaps{R_{k+n}} \Z_{k+n}
\end{equation}
 with $\Z_{k+n}$ uniformly bounded.  Stringing together  (\ref{first}) and (\ref{2nd}) shows that $\X$ has sFDC.
\end{proof}

The notion of sFDC can be weakened in a manner analogous to the definition of weak FDC.  

\begin{defn}
 A metric family $\X$ has \emph{weak straight finite decomposition complexity}
if there exists a sequence 
$\bk = (k_1, k_2, \ldots)$ $($$k_i\in \N$$)$ such that
 for every sequence
$R_1 < R_2 < \ldots$ of positive numbers, there exists an $n \in \N$  and metric families $\X_0, \X_1, \X_2, \ldots, \X_n$
such that $\X = \X_0$, the family $\X_{i}$ is $(k_{i+1}, R_{i+1})$--decomposable over $\X_{i+1}$, and the
family $\X_n$ is uniformly bounded.  We say that $\X$ has weak sFDC \e{with respect to the sequence} $\bk = (k_1, k_2, \ldots)$.
\end{defn}

Dydak and Virk call this notion \e{countable asymptotic dimension}.  In an earlier version of this article, we asked whether metric spaces with weak sFDC have Property A.  In fact, Dydak and Virk show that countable asymptotic dimension is \e{equivalent} to sFDC~\cite[Theorem 8.4]{Dydak-Virk}, and sFDC spaces have Property A by a result of Dranishnikov and Zarichnyi~\cite{DZ}. 

%
%

\section{Extendability of straight finite decomposition complexity}\label{sFDC-sec}

We now consider basic extendability properties for straight finite decomposition complexity.

The usual argument for coarse inheritance of FDC also proves the following result.

\begin{lem}$\label{ci-wsFDC}$
 If $X$ has sFDC and there exists a coarse embedding $Y\to X$, then $Y$ also has sFDC.  
\end{lem}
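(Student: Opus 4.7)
The plan is to mimic the standard coarse-inheritance argument for finite decomposition complexity. Let $f \co Y \to X$ be a coarse embedding, so there exist nondecreasing functions $\rho_-, \rho_+ \co [0,\infty) \to [0,\infty)$ with $\lim_{t\to\infty}\rho_-(t) = \infty$ such that
\[
\rho_-(d_Y(y,y')) \leqs d_X(f(y), f(y')) \leqs \rho_+(d_Y(y,y'))
\]
for all $y, y' \in Y$. The two key bookkeeping facts I will use throughout are: (i) if $A, B \subset X$ satisfy $d_X(A,B) > \rho_+(r)$, then their preimages satisfy $d_Y(f^{-1}(A), f^{-1}(B)) > r$ (by contrapositive of the right-hand estimate), and (ii) if $A \subset X$ has $\mathrm{diam}(A) \leqs D$, then $\mathrm{diam}(f^{-1}(A)) \leqs \sup\{t : \rho_-(t) \leqs D\}$, which is finite since $\rho_- \to \infty$.

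Given an arbitrary increasing sequence $R_1 < R_2 < \cdots$ of positive numbers for $Y$, I first choose an auxiliary strictly increasing sequence $S_1 < S_2 < \cdots$ with $S_i \geqs \rho_+(R_i)$ for each $i$ (e.g.\ set $S_i = \rho_+(R_i) + i$). Since $\{X\}$ has sFDC, applying the definition to $(S_i)$ yields some $n\in\N$ and metric families $\X_0 = \{X\}, \X_1, \ldots, \X_n$, with $\X_n$ uniformly bounded, such that $\X_i \srm{S_{i+1}} \X_{i+1}$ for each $i < n$.

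I then pull back the entire tower to $Y$: for each $i$, let
\[
\Y_i = \{ f^{-1}(X') : X' \in \X_i \},
\]
considered with the induced metrics from $Y$, so that $\Y_0 = \{Y\}$. The decomposition $\X_i \srm{S_{i+1}} \X_{i+1}$ expresses each $X' \in \X_i$ as $X'_0 \cup X'_1$, with each $X'_j$ a $S_{i+1}$-disjoint union of elements of $\X_{i+1}$. Taking preimages gives $f^{-1}(X') = f^{-1}(X'_0) \cup f^{-1}(X'_1)$, and by observation (i), each $f^{-1}(X'_j)$ is an $R_{i+1}$-disjoint union of members of $\Y_{i+1}$. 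Thus $\Y_i \srm{R_{i+1}} \Y_{i+1}$. Finally, observation (ii) shows that $\Y_n$ is uniformly bounded since $\X_n$ is. Stringing the decompositions together verifies sFDC for $\{Y\}$ with respect to the prescribed sequence $(R_i)$.

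There is no real obstacle here; the only point to be careful about is ensuring the auxiliary sequence $(S_i)$ is strictly increasing (since $\rho_+$ need not be), which the modification $S_i = \rho_+(R_i) + i$ handles. Everything else is a routine verification that coarse embeddings transport $r$-decompositions and uniform boundedness in the expected direction.
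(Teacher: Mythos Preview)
Your argument is correct and is exactly the ``usual argument for coarse inheritance of FDC'' that the paper invokes without writing out; the paper gives no further details beyond that remark, so your proof is a faithful elaboration of the intended approach.
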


For the next result, the following notion for metric families will be useful.
\begin{defn}
Let $\X$ be a metric family.  The \emph{subspace closure} of $\X$, denoted by $\X'$
is the metric family  $\X' = \left\{ X  : \textrm{ there exists } Y \in \X \textrm{ with } X \subset Y \right\}$.
\end{defn}

\begin{thm}\label{thm:sFDCFibers}
Let $f \co E \to B$ be a uniformly expansive, homogeneous map.  Assume that $B$ has sFDC   
and assume that there exists $b_0 \in B$ 
such that for each $r > 0$, the space  $f^{-1} (B_r (b_0))$ has  sFDC. 
Then $E$ has sFDC.

In particular,  sFDC satisfies the Homogeneous Fibering Theorem.
\end{thm}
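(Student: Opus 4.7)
The plan is to build an sFDC chain for $E$ in two phases: first mirror the sFDC chain of $B$ by lifting decompositions through $f$, then reduce the resulting family of fibers to the single model space $Z$ and apply its sFDC.

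\textbf{Phase 1 (lifting from $B$).} Let $\rho$ be a nondecreasing control function for $f$.  Given a sequence $R_1 < R_2 < \cdots$, pick a strictly increasing sequence $(S_i)$ with $S_i \geqs \rho(R_i)$, and apply sFDC of $B$ to $(S_i)$ to obtain a chain
$$\{B\} = \B_0 \srm{S_1} \B_1 \srm{S_2} \cdots \srm{S_n} \B_n$$
with $\B_n$ uniformly bounded of diameter $\leqs K$.  Set $\E_i = \{f^{-1}(A) : A \in \B_i\}$, so $\E_0 = \{E\}$.  The key observation, which uses only uniform expansiveness, is that any $S_{i+1}$-disjoint decomposition of $A \in \B_i$ lifts to an $R_{i+1}$-disjoint decomposition of $f^{-1}(A)$: if $d(x,y) \leqs R_{i+1}$ then $d(f(x),f(y)) \leqs \rho(R_{i+1}) \leqs S_{i+1}$, so $x, y$ cannot land in different pieces.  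Hence $\E_i \srm{R_{i+1}} \E_{i+1}$ for $0 \leqs i < n$.

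\textbf{Phase 2 (reducing to a model fiber).} We may assume $b_0 \in \im(f)$ (otherwise replace $b_0$ by a nearest image point and use coarse inheritance of sFDC).  For each $A \in \B_n$ with $A \cap \im(f) \neq \emptyset$, pick $a_A \in A \cap \im(f)$ and invoke homogeneity to produce isometries $\phi_A \co E \to E$ and $\bar\phi_A \co B \to B$ with $f \circ \phi_A = \bar\phi_A \circ f$ and $\bar\phi_A(a_A) = b_0$.  Then $\bar\phi_A(A)$ has diameter $\leqs K$ and contains $b_0$, so $\phi_A(f^{-1}(A)) = f^{-1}(\bar\phi_A(A))$ lies inside $Z := f^{-1}(B_{K+1}(b_0))$, which has sFDC by hypothesis.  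Thus $\{\phi_A(f^{-1}(A))\}_A$ is a sub-family of the subspace closure of $\{Z\}$, and $\E_n$ corresponds to it via the elementwise isometries $\phi_A$.  Since sFDC is preserved under subspace closure (one restricts each decomposition to subsets), under sub-families (one restricts the chain to the smaller indexing set), and under elementwise isometry, $\E_n$ itself has sFDC.  Applying sFDC of $\E_n$ to the tail sequence $(R_{n+j})_{j \geqs 1}$ produces families $\E_{n+1}, \ldots, \E_{n+m}$ with $\E_{n+j-1} \srm{R_{n+j}} \E_{n+j}$ and $\E_{n+m}$ uniformly bounded.

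\textbf{Assembly and main obstacle.}  Concatenating the two chains yields $\{E\} = \E_0 \srm{R_1} \E_1 \srm{R_2} \cdots \srm{R_{n+m}} \E_{n+m}$, proving $E$ has sFDC.  The ``in particular'' assertion is immediate: if $f^{-1}(D)$ has sFDC for every bounded $D \subset B$, then $f^{-1}(B_r(b_0))$ does for each $r$.  The main technical point that needs care is verifying that sFDC of metric families is stable under subspace closure, restriction to sub-families, and elementwise isometry.  These stability statements are not recorded explicitly in the paper, but each follows routinely from the definition (subsets of $r$-disjoint sets are $r$-disjoint; isometries send decompositions to decompositions).  Without them, the bookkeeping in Phase 2---packaging the pulled-back decompositions of many distinct fibers into a single chain of families of subsets of $E$---would be awkward; with them, it is essentially transparent, and the heart of the argument is the two-phase strategy itself.
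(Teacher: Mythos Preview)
Your proof is correct and follows essentially the same two-phase strategy as the paper: lift the sFDC chain of $B$ through $f$ using uniform expansiveness, then use homogeneity to reduce the terminal family of fibers to subspaces of the single model space $f^{-1}(B_r(b_0))$ and continue with its sFDC chain. The only difference is packaging in Phase~2---the paper explicitly translates the chain for $f^{-1}(B_r(b_0))$ by the isometries $\phi$ and then passes to subspace closures, whereas you first abstract the needed stability properties (subspace closure, sub-families, elementwise isometry) and then invoke sFDC of $\E_n$ directly; you are also slightly more careful about strict monotonicity of the control sequence and about assuming $b_0\in\im(f)$.
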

\begin{proof} 
Take $\rho$ to be the function from the definition of uniform expansion for $f$, and let $R_1 < R_2 < \ldots$ be given. 
Since $B$ has sFDC 
there is an $n \in \N$ and a sequence of metric families $\Y_0 = \{ B\}$, $\Y_1$, \ldots, $\Y_n$ such that $\Y_{i-1}$ is $\rho( R_i )$--decomposable over $\Y_{i}$
and $\Y_n$ is a uniformly bounded family.
Let \[ f^{-1}( \Y_i ) = \left\{ f^{-1}(Y) \, : \, Y \in \Y_i \right\}. \]
Then $f^{-1}(\Y_0) = \{ E \}$, and $f^{-1}( \Y_{i} )$ can be $R_{i+1}$--decomposed over $f^{-1}( \Y_{i+1} )$, since  inverse images of $\rho( R_{i+1} )$--disjoint sets in $B$ are $R_{i+1}$--disjoint in $E$.

This yields a sequence of decompositions of $E$ that ends with the family $f^{-1}( \Y_n )$, and by assumption 
there exists $r > 0$ such that each $Y \in \Y_n$ has diameter at most $r$.
Each $f^{-1}(Y)$ is isometric, via one of the isometries
$\bar \phi$ guaranteed by the definition of homogeneity, to a subspace of $f^{-1}( B_r( b_0 ) )$, so by Lemma~\ref{ci-wsFDC} we conclude that each space $f^{-1} (Y)$ has sFDC. 

Applying the definition of sFDC to the space $f^{-1}( B_r( b_0 ) )$ and the sequence of numbers $R_{n+1}< R_{n+2}< \cdots$ shows that there exists $N\geq 0$ and metric families
\[ \Z_{n}(b_0) = \left\{ f^{-1}( B_r(b_0) ) \right\}, \Z_{n+1}(b_0), \Z_{n+2}(b_0), \ldots, \Z_{n+N}(b_0)\]
such that  $\Z_{n+N}(b_0)$ is uniformly bounded and for $i=0, \ldots N-1$, $\Z_{n+i}(b_0)$ admits an $R_{n+i+1}$--decomposition over $\Z_{n+i+1}(b_0)$.

For $i = 0, \ldots, N$, let $\Z_{n+i}$ be the union over $b \in B$ of all translates of spaces in $\Z_{n+i}(b_0)$ under the isometries $\bar \phi$.  
Since decomposability is defined element-wise over elements in a metric family, we see that $\Z_{n+i}$ 
admits an $R_{n+i+1}$--decomposition 
over $\Z_{n+i+1}$.  Let $\Z'_{n+i}$ be the subspace closure of $\Z_{n+i}$, and note that $\Z'_{n+N}$ is still uniformly bounded.  
If $Z'\subset Z$ are metric spaces, then each decomposition of $Z$ can be 
intersected with $Z'$ to obtain a decomposition of $Z'$.
Hence $\Z'_{n+i}$ admits an $R_{n+i+1}$--decomposition over $\Z'_{n+i+1}$, and the same idea shows that $f^{-1}(\Y_n)$
admits an $R_{n+1}$--decomposition over $\Z'_{n+1}$.

The sequence of decompositions 
\begin{eqnarray*}\{E\} = f^{-1} (\Y_0) \srm{R_1} f^{-1}(\Y_1) \srm{R_2} f^{-1}(\Y_2)\srm{R_3}  \cdots \srm{R_n}  f^{-1}(\Y_n)\\
\xmaps{R_{n+1}}  \Z'_{n+1}\xmaps{R_{n+2}}  \ldots \xmaps{R_{n+N}} \Z'_{n+N} \end{eqnarray*}
shows that $E$ has sFDC.
\end{proof}

The Finite Union Theorem and the Union Theorem for sFDC were established in~\cite[Theorems 3.5 and 3.6]{DZ}.  
We now have the following consequence of Theorem~\ref{thm:extending}.

\begin{cor}$\label{sFDC-cor}$
Straight finite decomposition complexity  axiomatically extendable.   
In particular, if $G$ is relatively hyperbolic with respect to $H_1, \ldots, H_n$ and each $H_i$ has sFDC, then $G$ has sFDC.
\end{cor}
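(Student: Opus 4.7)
The plan is to verify that sFDC satisfies each of the four conditions in the definition of axiomatic extendability, after which the second sentence of the corollary follows immediately from Theorem~\ref{thm:extending} applied with $\P$ equal to sFDC. Three of these conditions are essentially already in hand: Coarse Inheritance is exactly Lemma~\ref{ci-wsFDC}, while the Finite Union Theorem and the Union Theorem for sFDC are Theorems~3.5 and 3.6 of~\cite{DZ}. Moreover, every metric space with finite asymptotic dimension has sFDC via the chain FAD $\Rightarrow$ FDC $\Rightarrow$ wFDC $\Rightarrow$ sFDC displayed in the introduction (the last implication being Proposition~\ref{impls}, and the first being in~\cite{GTY_FDC}). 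Thus the only remaining condition that requires argument is the Transitive Fibering Theorem, which I plan to derive as a direct consequence of the Homogeneous Fibering Theorem for sFDC (Theorem~\ref{thm:sFDCFibers}).

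To obtain the Transitive Fibering Theorem for sFDC from Theorem~\ref{thm:sFDCFibers}, suppose $\Gamma$ acts isometrically on $E$ and $B$, transitively on $B$, and $f\co E \to B$ is a contractive $\Gamma$--equivariant map such that $f^{-1}(D)$ has sFDC for every bounded $D \subset B$. First I note that $f$ is uniformly expansive (with $\rho$ the identity function, since contractive) and homogeneous: given $y_1, y_2$ in the image, choose $\gamma \in \Gamma$ with $\gamma \cdot y_1 = y_2$ and let $\phi$ and $\bar\phi$ be the isometries of $E$ and $B$ induced by $\gamma$; then $\bar\phi(y_1) = y_2$, and $f \circ \phi = \bar\phi \circ f$ by equivariance, as already observed in the paper just after the definition of the Transitive Fibering Theorem. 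Second, $B$ has finite asymptotic dimension by hypothesis, hence has sFDC by the chain of implications above. Finally, fixing any basepoint $b_0 \in B$, the ball $B_r(b_0)$ is bounded for each $r > 0$, so the hypothesis supplies that $f^{-1}(B_r(b_0))$ has sFDC for every $r > 0$, which is precisely the data required by Theorem~\ref{thm:sFDCFibers}. Applying that theorem yields that $E$ has sFDC.

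With all four axioms verified, the first sentence of the corollary is proved, and the second sentence then follows from Theorem~\ref{thm:extending}. I expect no substantive obstacle beyond the bookkeeping above, as the genuinely nontrivial analytic content—constructing, for a given sequence $R_1 < R_2 < \cdots$, the concatenated sequence of $R_i$--decompositions from a decomposition tower on the base together with a decomposition tower on a fiber over a bounded set—has already been carried out in the proof of Theorem~\ref{thm:sFDCFibers}.
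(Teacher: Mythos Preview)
Your proposal is correct and follows precisely the approach implicit in the paper: the paper records the Finite Union and Union Theorems from \cite{DZ}, Coarse Inheritance from Lemma~\ref{ci-wsFDC}, and derives the Transitive Fibering Theorem from the Homogeneous Fibering Theorem (Theorem~\ref{thm:sFDCFibers}) using the observation that equivariant maps under a transitive isometric action are homogeneous, together with the implication FAD $\Rightarrow$ sFDC. You have simply spelled out in full what the paper leaves as a one-line ``consequence of Theorem~\ref{thm:extending}.''
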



\end{document}